\def\){\big)}
\def\({\big(}
\def\bmu{\boldsymbol{\mu}}
\def\bgl{\boldsymbol{\lambda}}
\def\bnu{\boldsymbol{\nu}}
\def\ga{\alpha}
\def\gb{\beta}
\def\gma{\gamma}
\def\gl{\lambda}
\def\bz{\mathbb{Z}}
\def\fs{\mathfrak{s}}
\def\ft{\mathfrak{t}}
\def\fu{\mathfrak{u}}
\def\fv{\mathfrak{v}}
\def\sa{\mathscr{A}}
\def\h{\mathbf{H}}
\def\sr{\mathscr{R}}
\def\shape{\mathrm{Shape}}
\def\res{\mathrm{res}}
\def\std{\mathrm{Std}}
\def\mpn{\mathscr{P}_{m,n}}
\newdimen\hoogte    \hoogte=14pt    % hoogte  van hokje
\newdimen\breedte   \breedte=14pt   % breedte van hokje
\newdimen\dikte     \dikte=0.5pt    % dikte lijn
\newenvironment{point}[2]%
  {\vspace{1\jot}\ifx*#2\let\pointlabel\relax\else\def\pointlabel{#2}\fi
   \refstepcounter{equation}\trivlist
   \item[\hskip\labelsep\theequation.
         \ifx\pointlabel\relax\else\space\pointlabel\space\fi]
   \ignorespaces #1
  \vspace{1\jot}}{\relax}
\numberwithin{equation}{section}
\newtheorem{theorem}[equation]{Theorem}
\newtheorem{lemma}[equation]{Lemma}
\newtheorem{proposition}[equation]{Proposition}
\theoremstyle{definition}
\newtheorem{definition}[equation]{Definition}
\newtheorem{notations}[equation]{Notations}
\theoremstyle{remark}
\newtheorem{remark}[equation]{Remark}
\begin{document}
\begin{CJK*}{GBK}{song}
\setlength{\itemsep}{-0.25cm}
\fontsize{13}{\baselineskip}\selectfont
\setlength{\parskip}{0.4\baselineskip}
\vspace*{0mm}
\title[Fusion procedure]{\fontsize{9}{\baselineskip}\selectfont Fusion procedure for Degenerate cyclotomic Hecke algebras}
\author{Deke Zhao}
\address{School of Applied Mathematics, Beijing Normal University at Zhuhai, Zhuhai, 519087, China}

\email{ deke@amss.ac.cn}
\thanks{Zhao is  supported by the National Natural Science Foundation of China (Grant No. 11101037).}
%Information for second author
\author{Yanbo Li}
\address{School of Mathematics and Statistics, Northeastern
University at Qinhuangdao, Qinhuangdao, 066004, China}
\email{ liyanbo707@163.com}
\thanks{Li is supported by the Natural Science Foundation of Hebei Province, China (A2013501055).}
\subjclass[2010]{Primary 20C99, 16G99; Secondary 05A99, 20C15}
\keywords{Complex reflection group; Degenerate cyclotomic Hecke algebra; Jucys-Murphy elements; Schur elements; Fusion procedure}
\vspace*{-3mm}
\begin{abstract}The primitive idempotents of the generic degenerate cycloctomic Hecke algebras are derived by consecutive evaluations of a certain rational function. This rational function depends only on the Specht modules and the normalization factors are the weights of the Brundan-Kleshchev trace.
\end{abstract}
\maketitle
\vspace*{-5mm}
%%%%%%%%%%%%%%%%%%%%%%%%%%%%%%%%%%%%%%%%%%%%%%%%%%%%%%%%%%%%%%%%%%%%
%%%%%%%%%%%%%%%%%%%%%%%%%%%%%%%%%%%%%%%%%%%%%%%%%%%%%%%%%%%%%%%%%%%%
%\tableofcontents
%%%%%%%%%%%%%%%%%%%%%%%%%%%%%%%%%%%%%%%%%%%%%%%%%%%%%%%%%%%%%%%%%%%%
%%%%%%%%%%%%%%%%%%%%%%%%%%%%%%%%%%%%%%%%%%%%%%%%%%%%%%%%%%%%%%%%%%%%
\section{Introduction}
In \cite{Jucys} Jucys observed  that the primitive idempotents of the symmetric groups can be obtained by taking certain limit values of a rational function in several variables. A similar construction, now commonly referred to as the \textit{fusion procedure}, was developed by Cherednik \cite{Cherednik86}, while complete proofs relying on $q$-version of the Young symmetrizers were given by Nazarov  \cite{Nazarov-98}. This method has already been used by Nazarov (and Tarasov) \cite{Nazarov-2003,Nazarov-2004,Nazarov-Tarasov} and Grime \cite{Grime}. A simple version of the fusion procedure for the symmetric group was given by Molev in \cite{Molev}. Here the idempotents are obtained by consecutive evaluations of a certain rational function. This version of the fusion procedure relies on the existence of a maximal commutative subalgebra generated by the Jucys-Murphy elements and was developed for various algebras and groups (see e.g. \cite{Isaev-Molev-Oskin-2008,Isaev-Molev,Isaev-Molev-O,
Isaev-Molev-O-MBW,Ogievetsky-Andecy2011,Ogievetsky-Andecy2013}).

Let $m,n$ be positive integers and let $W_{m,n}$ be the complex reflection group of type $G(m,1,n)$. By \cite{shephard-toda}, $W_{m,n}$ has a presentation with generators
  $\tau, s_1, \dots, s_{n-1}$ where the defining relations are $\tau^m=1, s_1^2=\cdots=s_{n-1}^2=1$ and the homogeneous relations
  \begin{align*}&\tau s_1\tau s_1=s_1ts_1\tau,&&\tau s_i=s_it\text{ for }i\geq 2,\\
 &s_is_{i+1}s_i=s_{i+1}s_{i}s_{i+1} \text{ for }i\geq1&& s_is_j=s_js_i, \text{ if } |i-j|>1.\end{align*}
 It is well-known that $W_{m,n}\cong(\mathbb{Z}/m\bz)^{n}\rtimes \mathfrak{S}_{n}$, where  $\mathfrak{S}_{n}$ is the symmetric group of degree $n$ (generated by  $s_1, \dots, s_{n-1}$).
 The degenerate cyclotomic Hecke algebra $\h_{m,n}(Q)$ with parameters $Q$ is a natural deformation of the group algebra $KW_{m,n}$ over a filed $K$ (see Definition~\ref{Def:DCHA} below).

 Let  $\mathscr{P}_{m,n}$ be  the set of all $m$-multipartition of $n$. For $\bgl\in\mpn$,  let $S^{\bgl}$ be the Specht module $\h_{m,n}(Q)$  of type $\bgl$. Then $\{S^{\bgl}|\bgl\in\mpn\}$ is the set of irreducible representations of the generic cyclotomic Hecke algebras $\h_{m,n}(Q)$.  Let $E_{\ft}$ be the primitive idempotent of $\h_{m,n}(Q)$ corresponding to the standard $\bgl$-tableau $\ft$. A complete system of pairwise orthogonal primitive idempotents of $\h_{m,n}(Q)$ is parameterized by the set of standard tableaux of the $m$-multipartitions of $n$.

 This paper is concerned with the fusion procedure for $H_{m,n}(Q)$.
 As in \cite{Molev,Ogievetsky-Andecy2013}, we use the Jucys-Murphy elements of $\h_{m,n}(Q)$. The main result of the paper is the following:

\noindent\textbf{Theorem.} \textit{Let $\bgl$ be an $m$-multipartition of $n$ and $\ft$ a standard $\bgl$-tableau. Then the primitive idempotent $E_{\ft}$ of $\h_{m,n}(Q)$ corresponding to $\ft$ can be obtained by the following consecutive evaluations}
\begin{align*}
 E_{\ft}= \Theta_{\bgl}(Q)\Phi(z_1, \cdots, z_n)\biggl.\biggr|_{z=\res_{\ft}(1)}\cdots\, \biggl. \biggr|_{z=\res_{\ft}(n\!-\!1)}\biggl.\biggr|_{z=\res_{\ft}(n)},
\end{align*}
 \textit{where $\Phi(z_1, \cdots, z_n)$ is a rational function in several variables with values in $\h_{m,n}(Q)$ and $\Theta_{\bgl}(Q)$ is a rational functions in variables $Q$.}

 \vspace{2\jot}

 Remarkably, the coefficient $\Theta_{\bgl}(Q)$ appearing in Theorem depends only on the $m$-multipartition $\bgl$. In fact, the coefficient  $\Theta_{\bgl}(Q)$ is the weight of the Brundan-Kleshchev trace on $\h_{m,n}(Q)$ corresponding to the Specht modules $S^{\bgl}$, that is, $\Theta_{\bgl}(Q)$ is the inverse of the Schur element $s_{\bgl}(Q)$ of $\h_{m,n}(Q)$  (see \cite[Theorem~4.2]{Zhao-symm}).

In additional, the degenerate cyclotomic Hecke algebra is a cellular algebra with Jucys-Murphy  element (see \cite{AMR, Mathas-J-Reine}). It may be surprising that the cellularity is not used in the construction of the rational function $\Phi(z_1, \cdots, z_n)$ appearing in Theorem. It seems reasonable that we may develop an abstract framework for the fusion procedure for those algebras equipped with a family of inductively defined Jucys-Murphy elements satisfying some certain conditions (cf. \cite{Mathas-J-Reine}, \cite{Chlo-d'Andecy}).
 We hope to return this issue in future work.

This paper is organized as follows. Section~\ref{Sec:Preliminaries} contains definitions and notations about the multipartitions, the degenerate cyclotomic Hecke algebra, the Jucys-Murphy elements  and the Baxterized elements and gives facts. The rational function $\Theta_{\bgl}(Q)$ is    introduced and investigated in Section~\ref{Sec:Combinatorial-lemmas}, in particular, a combinatorial formulation of $\Theta_{\bgl}(Q)$ is presented. Finally,  we define the rational function $\Phi(z_1, \cdots, z_n)$ and prove the main theorem in the last section.
\subsection*{Acknowledgements} The authors are grateful to Professor Chengming Bai for his hospitality during their visits to the Chern Institute of Mathematics (CIM) in Nankai University. Part of this work was carried out while the first author was visiting CIM and the Kavli Institute for Theoretical Physics China (KITPC) at the Chinese Academy of Sciences in Beijing.
%%%%%%%%%%%%%%%%%%%%%%%%%%%%%%%%%%%%%%%%%%%%%%%%%%%%%%%%%%%%%%%%%%%%
%%%%%%%%%%%%%%%%%%%%%%%%%%%%%%%%%%%%%%%%%%%%%%%%%%%%%%%%%%%%%%%%%%%%

\section{Preliminaries}\label{Sec:Preliminaries}

\begin{point}{}* A {\it partition} $\lambda=(\lambda_1,\lambda_2,\cdots)$ is a decreasing sequence of non-negative integers containing only finitely many non-zero terms.
We define the {\it length} of $\lambda$ to be the smallest integer $\ell(\lambda)$
such that $\lambda_i=0$ for all $i>\ell(\lambda)$ and set $|\lambda|:=\sum_{i \geq 1}\lambda_i$.
If $|\lambda|=n$ we say that $\lambda$ is a \textit{partition} of $n$.

Let $m, n$ be positive integers. An {\it $m$-multipartition} of $n$ is an ordered $m$-tuple
$\bgl=(\gl^1; \dots; \gl^m)$ of partitions $\lambda^{i}$ such that
$n=\sum_{i=1}^m|\lambda^{i}|$. We denote by $\mpn$ the set of all $m$-multipartitions of $n$.

The {\it diagram} of an $m$-multipartition $\bgl$ is the set
$$ [\bgl]:=\{(i,j,c)\in\bz_{>0}\times\bz_{>0}\times \mathbf{m}|1\le j\le\lambda^c_i\}, \quad\text{ where }\mathbf{m}=\{1, \dots, m\}.$$
The elements of $[\bgl]$ are the \textit{nodes} of $\bgl$; more generally, a node is any element of $\bz_{>0}\times \bz_{>0}\times \mathbf{m}$. We may and will identity an $m$-multipartition with its diagram.   A node
$\ga\notin\bgl$ is {\it addable} for $\bgl$ if
$\bgl\cup\{\ga\}$ is the diagram of an $m$-multipartition and a node
$\gb$ of $\bgl$ is {\it removable} for $\bgl$ if
$\bgl\setminus\{\gb\}$ is the diagram of an $m$-multipartition. We denote by $\sa(\bgl)$ (resp. $\sr(\bgl)$) the set of addable (resp. removable) nodes of $\bgl$.

 A {\it $\bgl$-tableau} is a bijection $\ft: [\bgl]\rightarrow\{1,2,\dots,n\}$ and  write $\text{Shape}(\ft)=\bgl$ if $\ft$ is a $\bgl$-tableau.
We may and will identify a $\bgl$-tableau $\ft$ with an $m$-tuple of tableaux
$\ft=(\ft^1; \dots; \ft^m)$, where the {\it$c$-component} $\ft^{c}$ is a $\lambda^{c}$-tableau for $1\le c\le m$.
A $\bgl$-tableau is {\it standard} if in each component the entries increase along the rows and
down the columns. We denote by $\std(\bgl)$ the set of all standard
$\bgl$-tableaux.\end{point}

\begin{definition}\label{Def:DCHA}Let $K$ be a field and $Q=\{q_1, \dots, q_m\}\subset K$.
The {\it degenerate cyclotomic Hecke algebra} is the unital
associative $K$-algebra  $\h\!:=\h_{m,n}(Q)$ generated by
$t,t_1,\dots,t_{n-1}$ and subjected to relations

\begin{enumerate}\item
$(t-q_1)\dots(t-q_m)=0$,
\item $t(t_1tt_1+t_1)=(t_1tt_1+t_1)t$ and $tt_i=t_it$ for $i\geq 2$,
\item $t_i^2=1$ \text{ for } $1\le i< n$,
\item $t_it_{i+1}t_i=t_{i+1}t_it_{i+1}$ for  $1\le i<n-1$,
\item $t_it_j=t_jt_i$ for $|i-j|>1$.
\end{enumerate}
The \textit{Jucys-Murphy elements} of the algebra $\h$ are define inductively as following:
\begin{equation}\label{Equ:JM-def}
J_1\!:=t \quad \text{ and }\quad J_{i+1}\!:=t_iJ_it_i+t_i, \quad i=1, \cdots, n-1.
  \end{equation}\end{definition}
  Then $t_iJ_j=J_jt_i$ if $i\neq j-1, j$ and $J_jJ_k=J_kJ_j$ if $1\le j, k\le n$.
Furthermore, the Jucys-Murphy elements $J_i$  $(i=1, \cdots, n)$ generate a maximal commutative subalgebra  of $\h$, furthermore, the center $\mathcal{Z}(\h)$ of $\h$ is the algebras generated by the symmetric polynomials of the Jucys-Murphy elements $J_1, \cdots, J_n$ (ref. \cite{Brundan}).

For any distinct $i,j=1, \cdots, n-1$, we define the \textit{Baxterized elements} with spectral  parameters $x,y$:
\begin{equation}\label{Equ:t_{ij}(x,y)}
  t_{ij}(x,y):=\frac{1}{x-y}+t_{ij},
\end{equation}
Here and in what follows $t_{ij}$ denotes the element $t_{ij}:=t_it_{i+1}\cdots t_j$ for all $|i-j|>1$.

These Baxterized elements take values in the algebra $K\mathfrak{S}_{n}$ and satisfy the following relations:
\begin{align*}
 & t_i(x,y)t_{i+1}(x,z)t_i(y,z)=t_{i+1}(y,z)t_i(x,z)t_{i+1}(x,y),\\ &t_{ij}(x,y)t_{ji}(y,x)=1-(x-y)^{-2}.
\end{align*}

From now on we let $f(z)=(z-q_1)(z-q_2)\cdots(z-q_m)$ and define the following rational function with values in $\h$:
\begin{equation}\label{Equ:def-t(z)}
  t(z):=\frac{f(z)}{z-t}.
\end{equation}
Then $t(z)$ is a polynomial function in $z$. Moreover the elements $t(z)$ and $t_1(x,y)$ satisfy the following reflection equation with parameters $x, y$:
\begin{equation}\label{Equ:Reflection-equation}
 t(x)t_1(x,y)t(y)t_1+t(x)t_1(x,y)=t_1(x,y)t(x)+t_1t(y)t_1(x,y)t(x).
\end{equation}
\begin{remark}
  The facts that Baxterized elements satisfy the reflection equation~(\ref{Equ:Reflection-equation}) will not be used in this paper. It seems likely that we may use this fact to construct the Bethe subalgebra of degenerate cyclotomic Hecke algebras  (cf. \cite{Isaev-Kirillov}). We hope to return this issue in the future.
\end{remark}

We shall work with a generic degenerated cyclotomic  Hecke algebra, that is, $q_1, \cdots, q_m$ are indeterminates and the algebra $\h$ over a certain localization of the ring $K[q_1, \cdots, q_m]$, or in a specialization such that the following separation condition is satisfied:

\begin{equation*}\label{Equ:Separation}P_\h(Q)=n!
  \prod_{1\le i<j\le m}\prod_{|d|\leq n}(d+q_i-q_j)\neq 0.\end{equation*}
%%%%%%%%%%%%%%%%%%%%%%%%%%%%%%%%%%%%%%%%%%%%%%%%%%%%%%%%%%%%%%%%%%%%%%%%%%%%%
\section{Combinatorial formulae}\label{Sec:Combinatorial-lemmas}
%%%%%%%%%%%%%%%%%%%%%%%%%%%%%%%%%%%%%%%%%%%%%%%%%%%%%%%%%%%%%%%%%%%%%%%%%
\begin{point}{}* Let $\bgl$ be an $m$-multipartition  of $n$ and $\ft$ a standard $\bgl$-tableau. For $i=1, \cdots, n$, we define the {\it residue} of $i$ in $\ft$ to be $\res_\ft(i)=b-a+q_c$ if the number $i$ appears in the node $(a, b,c)$. More generally, if $x=(a,b,c)$ is in $\bgl\cup \sa(\bgl)$ then we put $\res(x)=b-a+q_c$.\end{point}

The following Lemma is well-known (cf. \cite[Lemma 3.34]{Mathas-book}, \cite[Lemma~3.12]{James-Mathas}), and easy verified by induction on $n$.

\begin{lemma}\label{Lemm:Separation}
Assume that $\h$ is generic.  Let $\bgl$ and $\bmu$ be $m$-multipartitions of $n$ and
  suppose that $\fs\in\std(\bgl)$ and $\ft\in\std(\bmu)$.
\begin{enumerate}
\setlength\itemsep{1pt}
\item $\fs=\ft$ (and $\bgl=\bmu$) if and only if $\res_\fs(k)=\res_\ft(k)$ for
$k=1,\dots,n$.
\item Suppose that $\bgl=\bmu$ and there exists an $i$ such that $\res_{\fs}(k)=
\res_{\ft}(k)$ for all $k\neq i, i+1$. Then either $\fs=\ft$ or  $\fs=\ft(i,i+1)$.
\end{enumerate}
\end{lemma}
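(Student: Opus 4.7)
The plan is to prove first a sub-lemma that under the separation condition, distinct addable (and distinct removable) nodes of any $m$-multipartition of size at most $n$ have distinct residues. For two distinct nodes $\alpha=(a,b,c)$ and $\beta=(a',b',c')$ in $\sa(\bnu)\cup\sr(\bnu)$: if $c=c'$, the standard fact that addable/removable nodes of a single partition lie on distinct diagonals forces $b-a\neq b'-a'$ and hence $\res(\alpha)\neq\res(\beta)$; if $c\neq c'$, then $\res(\alpha)-\res(\beta)=(b-a-b'+a')+(q_c-q_{c'})$ with $|b-a-b'+a'|\leq n$, so non-vanishing follows from the hypothesis $P_\h(Q)\neq0$ applied to the factor $d+q_c-q_{c'}$ with $d=a'-a-b'+b$.

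For part (i), I would do induction on $n$, the base case $n\leq 1$ being immediate from the sub-lemma. For the inductive step, let $\alpha$ be the node of $\bgl$ containing $n$ in $\fs$ and $\beta$ the node of $\bmu$ containing $n$ in $\ft$; these are removable. Let $\fs'=\fs\setminus\{n\}\in\std(\bgl\setminus\{\alpha\})$ and $\ft'=\ft\setminus\{n\}\in\std(\bmu\setminus\{\beta\})$. Since their residue sequences of length $n-1$ agree, the induction hypothesis gives $\fs'=\ft'$, so $\bgl\setminus\{\alpha\}=\bmu\setminus\{\beta\}$ as diagrams. Now $\alpha$ and $\beta$ are both addable to this common diagram, and $\res(\alpha)=\res_\fs(n)=\res_\ft(n)=\res(\beta)$, so the sub-lemma gives $\alpha=\beta$, whence $\bgl=\bmu$ and $\fs=\ft$.

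For part (ii), write $\fs^{(k)}$ (respectively $\ft^{(k)}$) for the sub-tableau of $\fs$ (respectively $\ft$) obtained by deleting the entries larger than $k$. By part (i) applied to the residue sequences of length $i-1$ that agree, $\fs^{(i-1)}=\ft^{(i-1)}$, call its shape $\bnu$. At the other end, since $\bgl=\bmu$, I would run a descending induction on $k$ from $n$ down to $i+2$: assuming $\fs^{(k)}$ and $\ft^{(k)}$ have the same shape, the entry $k$ sits in a removable node of that shape, with residue $\res_\fs(k)=\res_\ft(k)$, so the sub-lemma forces those two removable nodes to coincide and $\fs^{(k-1)},\ft^{(k-1)}$ again share a shape. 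This yields that $\fs^{(i+1)}$ and $\ft^{(i+1)}$ have a common shape $\bnu'$, and moreover the positions of entries $>i+1$ in $\fs$ and $\ft$ coincide. Then $\bnu'\setminus\bnu=\{\alpha,\beta\}$ is a pair of addable nodes of $\bnu$, each receiving one of the labels $i,i+1$ in each tableau; the two ways of doing so give either $\fs=\ft$ or the swap $\fs=\ft\cdot(i,i+1)$. If $\alpha,\beta$ lie in the same row or column the standardness of both tableaux forces the smaller label to go in the lower position, so only $\fs=\ft$ occurs; otherwise the swap also yields a standard tableau and is the second possibility.

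The main obstacle is the sub-lemma on distinctness of residues of addable/removable nodes, which is where the separation condition $P_\h(Q)\neq 0$ is essential; once this is in hand, both parts reduce to bookkeeping via the restrictions $\fs^{(k)}$. A mild subtlety in (ii) is verifying that the positions of entries exceeding $i+1$ must agree in $\fs$ and $\ft$ before analyzing the placement of $i$ and $i+1$; this is exactly the purpose of the descending induction, and it is what prevents the exceptional set of indices from being larger than $\{i,i+1\}$.
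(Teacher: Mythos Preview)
Your argument is correct. The paper does not supply its own proof of this lemma: it merely remarks that the result is well-known, cites \cite[Lemma~3.34]{Mathas-book} and \cite[Lemma~3.12]{James-Mathas}, and says it is ``easy verified by induction on $n$''; your proposal carries out exactly such an induction and is consistent with those references.
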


\begin{point}{}*
The \textit{conjugate} of  a partition $\gl$ is the partition $\hat{\gl}=(\hat{\gl}_1\geq\hat{\gl}_2\geq\dots)$
 whose  diagram is the transpose of the diagram of $\gl$, i.e. $\hat{\gl}_i$ is the number of nodes in the $i$-th column of the diagram of $\gl$. Hence $\hat{\gl}_1=\ell(\gl)$ and a node
$(i,j)$ of $\gl$ is removable if and only if $j=\gl_i$ and $i=\hat{\gl}_{j}$.

Recall that the $(i,j)$-th {\it hook} in $\lambda$ is the
collection of nodes to the right of and below the node $(i,j)$,
including the node $(i,j)$ itself, and that the $(i,j)$-th {\it hook length} of $\gl$ is
$$h^{\lambda}_{i,j}=\lambda_i-i+\hat{\lambda}_j-j+1.$$

Now let $\gl$ and $\mu$ be partitions.
If $(i, j)$ is a node of $\gl$ then
 \textit{the generalized hook length} of the node $(i, j)$ with respect to $(\gl, \mu)$ is $$h_{i,j}^{\gl,\mu}=\gl_i-i+\hat{\mu}_j-j+1.$$
  Observe that if $\gl=\mu$ then $h_{i,j}^{\gl,\mu}=h_{i,j}^{\gl}$.
\end{point}

\vspace{1\jot}
Let $\bgl=(\gl^1;\cdots; \gl^m)$ be an $m$-multipartition of $n$.  We introduce the following function in variables $Q$:
\begin{equation}\label{Equ:theta-bgl}
 \Theta_{\bgl}(Q)=\prod_{1\leq s\leq m}\prod_{(i,j)\in\gl^s}\prod_{1\leq t\leq m}\frac{1}{h^{\gl^s,\gl^t}_{i,j}+q_s-q_t}.
\end{equation}
Thanks to Lemma~\ref{Lemm:Separation}, $\Theta_{\bgl}(Q)$ is a rational function and  can be reformulated as following:
\begin{equation*}
 \Theta_{\bgl}(Q)=\prod_{1\leq s\leq m}\prod_{(i,j)\in\gl^s}\(\frac{1}{h^{\gl^s}_{i,j}}\prod_{1\leq t\leq m\,\&\,t\neq s}\frac{1}{h^{\gl^s,\gl^t}_{i,j}+q_s-q_t}\).
\end{equation*}
\begin{remark}
  The rational function $\Theta_{\bgl}(Q)$ is the weight of the Brundan-Kleshchev trace on $\h_{m,n}(Q)$ corresponding to the Specht module $S^{\bgl}$, that is, it is the inverse of the Schur element $s_{\bgl}(Q)$ of $\h_{m,n}(Q)$ (ref. \cite[Theorem~5.5]{Zhao} and \cite[Theorem~4.2]{Zhao-symm}).
\end{remark}

\begin{lemma}\label{Lemm:Theta-gl-mu=nu}Let $\gl$ and $\mu$ be partitions. Assume that $\ga=(\imath,\jmath)$ is a removable node of $\gl$ and let $\nu=\gl\!-\!\{\ga\}$. We define the  following rational function in variables $x, y$:
\begin{equation*}
  \Theta_{\gl,\mu}(x, y):=\prod_{(i,j)\in\mu}\frac{1}{h^{\gl,\mu}_{i,j}+x-y}
  \prod_{(i,j)\in\gl}\frac{1}{h^{\mu,\gl}_{i,j}+y-x}.
\end{equation*}
Then \begin{equation}\label{Equ:theta-lambda-mu-nu-z}
  \frac{\Theta_{\gl,\mu}(x,y)}{\Theta_{\nu,\mu}(x,y)}=\prod_{\gb\in\sr(\mu)}\(\res(\gb)-\res(\ga)+y-x\)
  \prod_{\gma\in\sa(\mu)}\(\res(\gma)-\res(\ga)+y-x\)^{-1}.
\end{equation}
\end{lemma}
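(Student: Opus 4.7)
The plan is to compute $\Theta_{\lambda,\mu}(x,y)/\Theta_{\nu,\mu}(x,y)$ directly and simplify by telescoping. Since $\nu = \lambda \setminus \{\alpha\}$ with $\alpha = (\imath,\jmath)$, only $\nu_\imath = \lambda_\imath - 1$ and $\hat{\nu}_\jmath = \hat{\lambda}_\jmath - 1$ differ from $\lambda$ and $\hat{\lambda}$, so $h^{\nu,\mu}_{i,j}$ differs from $h^{\lambda,\mu}_{i,j}$ only for $i = \imath$ (by $-1$), and $h^{\mu,\nu}_{i,j}$ differs from $h^{\mu,\lambda}_{i,j}$ only for $j = \jmath$ (by $-1$). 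Most factors in the ratio cancel, and using the removability conditions $\lambda_\imath = \jmath$ and $\hat{\lambda}_\jmath = \imath$ (so $(i,\jmath) \in \nu$ iff $1 \leq i \leq \imath - 1$), one finds
\[
\frac{\Theta_{\lambda,\mu}}{\Theta_{\nu,\mu}} = \prod_{j=1}^{\mu_\imath}\frac{h^{\lambda,\mu}_{\imath,j} - 1 + x - y}{h^{\lambda,\mu}_{\imath,j} + x - y} \cdot \frac{1}{h^{\mu,\lambda}_{\imath,\jmath} + y - x} \cdot \prod_{i=1}^{\imath - 1}\frac{h^{\mu,\lambda}_{i,\jmath} - 1 + y - x}{h^{\mu,\lambda}_{i,\jmath} + y - x}.
\]

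Next I introduce $v := \res(\alpha) + x - y = \jmath - \imath + x - y$ and $c_i := \mu_i - i$. The removability conditions give the clean forms $h^{\lambda,\mu}_{\imath,j} + x - y = v + 1 + \hat{\mu}_j - j$ and $h^{\mu,\lambda}_{i,\jmath} + y - x = c_i + 1 - v$, so every factor becomes linear in $v$. Meanwhile, the right-hand side of the lemma rewrites as $\prod_\beta (\res(\beta) - v)/\prod_\gamma (\res(\gamma) - v) = -\prod_\beta (v - \res(\beta))/\prod_\gamma (v - \res(\gamma))$ using the standard fact $|\sa(\mu)| = |\sr(\mu)| + 1$. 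The claim therefore reduces to a purely rational identity in $v$.

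Both pieces of the simplified LHS are then handled by telescoping. In $P_1 := \prod_{j=1}^{\mu_\imath}\frac{v - (j - \hat{\mu}_j)}{v - (j - 1 - \hat{\mu}_j)}$, two consecutive factors cancel precisely when $\hat{\mu}_j = \hat{\mu}_{j+1}$, so the product collapses within each maximal block of columns of $\mu$ of constant height; the distinct heights $\geq \imath$ occurring in $[1, \mu_\imath]$ are exactly the removable rows of $\mu$ of index $\geq \imath$ (with the last block closing at $\mu_\imath$ because $\hat{\mu}_{\mu_\imath + 1} < \imath \leq \hat{\mu}_{\mu_\imath}$), so after telescoping $P_1 = \prod (v - \res(\beta))/\prod(v - \res(\gamma))$, with $\beta$ running over removable nodes of row $\geq \imath$ and $\gamma$ over addable nodes of row $> \imath$. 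Likewise, the remaining piece $-\prod_{i=1}^{\imath-1}(v - c_i)/\prod_{i=1}^{\imath}(v - c_i - 1)$ collapses wherever $\mu_i = \mu_{i+1}$ (a non-removable step), leaving precisely the removable nodes in rows $< \imath$ and the addable nodes in rows $\leq \imath$. Multiplying the two pieces, the row-ranges for removable (resp. addable) nodes partition $\sr(\mu)$ (resp. $\sa(\mu)$), and the product equals $-\prod_{\beta \in \sr(\mu)}(v - \res(\beta))/\prod_{\gamma \in \sa(\mu)}(v - \res(\gamma))$, matching the RHS. The main obstacle is the combinatorial bookkeeping: verifying that each surviving pole or zero after telescoping is exactly the residue of an addable or removable node in the correct row range, and properly handling the boundary of the $j$-product.
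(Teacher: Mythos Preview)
Your argument is correct and follows essentially the same route as the paper. Both proofs compute the ratio directly, use that $\lambda$ and $\nu$ differ only in row $\imath$ and column $\jmath$ to reduce to a product over one row of $\mu$ times a product over one column of $\nu$, and then telescope along the block structure of $\mu$ to leave exactly the residues of removable and addable nodes. The only presentational difference is that the paper fixes the corner structure of $\mu$ explicitly via indices $i_1<\cdots<i_p$ and carries out the telescoping block by block, whereas you phrase the same telescoping intrinsically (cancellation when $\hat{\mu}_j=\hat{\mu}_{j+1}$, respectively $\mu_i=\mu_{i+1}$) and then identify the row ranges of the surviving addable and removable nodes; the two arguments match line for line once translated.
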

\begin{proof} Let $z:=x-y$ and suppose that \begin{align*}
    \mu=\(\mu_1=\cdots=\mu_{i_1}>\mu_{i_1+1}=\cdots=\mu_{i_2}>\cdots\cdots>\mu_{i_{p-1}+1}=\cdots=\mu_{i_p}>0\).
  \end{align*}
  We set $i_0=0$, $\mu_{i_{p+1}}=0$ and let $z=q_c-q_t$. Then
  \begin{align*}
   \sr(\mu)&=\left\{(i_k,\mu_{i_k})\mid k=1,\cdots, p \right\}; \\ \sa(\mu)&=\left\{(i_k+1, \mu_{i_{k+1}}+1)\mid k=0, \cdots, p\right\}.\end{align*}
   Therefore, the right hand side of (\ref{Equ:theta-lambda-mu-nu-z}) is
   \begin{align*}
     \mathrm{RHS}&=\prod_{\gb\in\sr(\mu)}\(\res(\gb)-\res(\ga)-z\)
     \prod_{\gma\in\sa(\mu)}\(\res(\gma)-\res(\ga)-z\)^{-1}\\
    &=\frac{1}{\mu_{1}+\imath-\jmath-z}\prod_{k=1}^p\frac{\mu_{i_k}-i_k+\imath-\jmath-z}
     {\mu_{i_{k+1}}-i_k+\imath-\jmath-z}.
   \end{align*}

 Suppose that $i_q<\imath\leq i_{q+1}$ for some $0\leq q\leq p+1$ where $i_{p+1}=+\infty$.
  Since $\ga=(\imath, \jmath)$ is a removable node of $\gl$ and $\nu=\gl-\{\ga\}$, we yield that  \begin{align*}
   &\gl_i=\nu_i  \text{ for } i\neq \imath;&& \hat{\gl}_j=\hat{\nu}_j \text{ for }j\neq \jmath;
    &&\gl_{\imath}=\nu_{\imath}+1=\jmath;&& \hat{\gl}_{\jmath}=\hat{\nu}_{\jmath}+1=\imath.
  \end{align*}Therefore
    \begin{align*}
   \frac{\Theta_{\gl,\mu}(x,y)}{\Theta_{\nu,\mu}(x,y)}
   &=\frac{1}{h^{\mu,\gl}_{\imath,\jmath}-z}
      \prod_{(i,j)\in\mu}\frac{h^{\nu,\mu}_{i,j}+z}{h^{\gl,\mu}_{i,j}+z}\prod_{(i,j)\in\nu}
      \frac{h^{\mu,\nu}_{i,j}-z}{h^{\mu,\gl}_{i,j}-z}\\
 &=\frac{1}{\mu_{\imath}-\jmath-z+1}\prod_{(i,j)\in\mu}\frac{\nu_i+\hat{\mu}_j-i-j+1+z}
 {\gl_i+\hat{\mu}_j-i-j+1+z}\prod_{(i,j)\in\nu}\frac{\mu_i+\hat{\nu}_j-i-j+1-z}{\mu_i+\hat{\gl}_j-i-j+1-z}\\
 &=\frac{1}{\mu_{\imath}-\jmath-z+1}
    \prod_{(\imath,j)\in\mu}\frac{\hat{\mu}_j-j+\jmath-\imath+z}{\hat{\mu}_j-j+\jmath-\imath+1+z}
    \prod_{(i,\jmath)\in\nu}\frac{\mu_i-i+\imath-\jmath-z}{\mu_i-i+\imath-\jmath+1-z}\\
     &=\frac{1}{\mu_{\imath}-\jmath-z+1}
    \prod_{j=1}^{\mu_{i_{q+1}}}\frac{\hat{\mu}_j-j+\jmath-\imath+z}{\hat{\mu}_j-j+\jmath-\imath+1+z}
    \prod_{i=1}^{\imath-1}\frac{\mu_i-i+\imath-\jmath-z}{\mu_i-i+\imath-\jmath+1-z}\\
     &=\frac{1}{\mu_{\imath}-\jmath-z+1}
    \prod_{k=p, \cdots, q+1}\,\prod_{j=\mu_{i_{k+1}+1}}^{\mu_{i_{k}}}\frac{\hat{\mu}_j-j+\jmath-\imath+z}
    {\hat{\mu}_j-j+\jmath-\imath+1+z}\\
    &\qquad\qquad\times\(\prod_{k=1}^q\prod_{i=i_{k-1}+1}^{i_k}\frac{\mu_i-i+\imath-\jmath-z}
    {\mu_i-i+\imath-\jmath+1-z}\)
    \prod_{i=i_{q}+1}^{\imath-1}\frac{\mu_i-i+\imath-\jmath-z}{\mu_i-i+\imath-\jmath+1-z}\\
    &=\frac{1}{\mu_{i_{q+1}}-i_q+\imath-\jmath-z}
    \prod_{k=q+1}^{p}\frac{i_k-\mu_{i_{k}}+\jmath-\imath+z}{i_k-\mu_{i_{k+1}}+\jmath-\imath+z}
        \prod_{k=1}^q\frac{\mu_{i_k}-i_k+\imath-\jmath-z}{\mu_{i_k}-i_{k-1}+\imath-\jmath-z}\\
    &=\prod_{k=q+1}^{p}\frac{\mu_{i_{k}}-i_k+\imath-\jmath-z}{\mu_{i_{k}}-i_{k-1}+\imath-\jmath-z}
        \prod_{k=1}^q\frac{\mu_{i_k}-i_k+\imath-\jmath-z}{\mu_{i_k}-i_{k-1}+\imath-\jmath-z}\\
        &=\frac{1}{\mu_{1}+\imath-\jmath+z}
    \prod_{k=1}^{p}\frac{\mu_{i_{k}}-i_k+\imath-\jmath-z}{\mu_{i_{k+1}}-i_{k}+\imath-\jmath-z}.
         \end{align*}
         As a consequence, we have completed the proof.\end{proof}

The following  fact will be used in the sequence.
\begin{lemma}\label{Lemm:theta-lambda-mu}Assume that $\gl$ is a partition and that $\ga$ is a removable node of $\gl$. Let $\mu$ be the subpartition of $\gl$ by removing the node $\ga$.  Then
  \begin{align*}\frac{\prod_{(i,j)\in\mu}h^{\mu}_{i,j}}{\prod_{(i,j)\in\gl} h^{\gl}_{i,j}}&=\prod_{\gb\in\sr(\mu)}\(\res(\gb)-\res(\ga)\)
  \prod_{\ga\neq\gma\in\sa(\mu)}\(\res(\gma)-\res(\ga)\)^{-1}\\
  &=\prod_{\gb\in\sr(\mu)}\(\res(\ga)-\res(\gb)\)
  \prod_{\ga\neq\gma\in\sa(\mu)}\(\res(\ga)-\res(\gma)\)^{-1}.\end{align*}
\end{lemma}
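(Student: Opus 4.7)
The plan is to obtain Lemma~\ref{Lemm:theta-lambda-mu} as the residue at $y=x$ of Lemma~\ref{Lemm:Theta-gl-mu=nu}, specialised so that the auxiliary partition of that lemma is our $\mu=\gl\setminus\{\ga\}$; then the $\nu$ of Lemma~\ref{Lemm:Theta-gl-mu=nu} also becomes our $\mu$, and the identity there reads
\begin{equation*}
\frac{\Theta_{\gl,\mu}(x,y)}{\Theta_{\mu,\mu}(x,y)}
=\prod_{\gb\in\sr(\mu)}\(\res(\gb)-\res(\ga)+y-x\)
\prod_{\gma\in\sa(\mu)}\(\res(\gma)-\res(\ga)+y-x\)^{-1}.
\end{equation*}
Since $\ga\in\sa(\mu)$, the factor indexed by $\gma=\ga$ on the right has a simple pole $(y-x)^{-1}$ at $y=x$. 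The matching pole on the left comes from the factor indexed by $(i,j)=\ga\in\gl$ in $\Theta_{\gl,\mu}(x,y)$, because a direct check gives $h^{\mu,\gl}_{\imath,\jmath}=\mu_{\imath}-\imath+\hat{\gl}_{\jmath}-\jmath+1=(\jmath-1)-\imath+\imath-\jmath+1=0$.

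I would next multiply both sides by $(y-x)$ and let $y\to x$. On the right the $\gma=\ga$ term is cancelled, leaving precisely $\prod_{\gb\in\sr(\mu)}\(\res(\gb)-\res(\ga)\)\prod_{\ga\neq\gma\in\sa(\mu)}\(\res(\gma)-\res(\ga)\)^{-1}$, i.e.\ the first right-hand side of the lemma. On the left, dividing by $\Theta_{\mu,\mu}(x,x)=\prod_{(i,j)\in\mu}(h^{\mu}_{i,j})^{-2}$ reduces the limit to $\prod_{(i,j)\in\mu}(h^{\mu}_{i,j})^{2}/\(h^{\gl,\mu}_{i,j}\,h^{\mu,\gl}_{i,j}\)$. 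Because $\gl$ and $\mu$ differ only at $\ga$, $h^{\gl,\mu}_{i,j}$ agrees with $h^{\mu}_{i,j}$ outside row $\imath$ and is larger by $1$ in row $\imath$, while $h^{\mu,\gl}_{i,j}$ agrees with $h^{\mu}_{i,j}$ outside column $\jmath$ and is larger by $1$ in column $\jmath$. The generic nodes contribute $1$, and the surviving factors, together with $h^{\gl}_{\imath,\jmath}=1$, reassemble into $\prod_{(i,j)\in\mu}h^{\mu}_{i,j}/\prod_{(i,j)\in\gl}h^{\gl}_{i,j}$, which proves the first equality.

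The second equality follows from the elementary fact that every partition $\mu$ satisfies $|\sa(\mu)|=|\sr(\mu)|+1$, hence $|\sa(\mu)\setminus\{\ga\}|=|\sr(\mu)|$. Replacing each $\res(\gb)-\res(\ga)$ and each $\res(\gma)-\res(\ga)$ by its negative therefore introduces the same power of $-1$ in numerator and denominator, and they cancel. I expect the only real obstacle to be the hook-length bookkeeping in the preceding paragraph, namely verifying that only nodes in row $\imath$ and column $\jmath$ contribute non-trivially and that the surviving factors reassemble correctly; this parallels the telescoping already performed in the proof of Lemma~\ref{Lemm:Theta-gl-mu=nu}. As an alternative, one could avoid Lemma~\ref{Lemm:Theta-gl-mu=nu} entirely and compute $\prod h^{\mu}/\prod h^{\gl}$ directly by telescoping through the level structure $\mu_{i_{k-1}+1}=\cdots=\mu_{i_k}$ exploited there, arriving at the same formula.
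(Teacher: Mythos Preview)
Your argument is correct. Taking the $\mu$ of Lemma~\ref{Lemm:Theta-gl-mu=nu} equal to $\nu=\gl\setminus\{\ga\}$ and extracting the residue at $y=x$ does yield the first equality. The point you flag as the ``real obstacle'' is routine: for $(i,j)\in\mu$ with $i\neq\imath$ and $j\neq\jmath$ one has $h^{\gl,\mu}_{i,j}=h^{\mu,\gl}_{i,j}=h^{\mu}_{i,j}=h^{\gl}_{i,j}$; for $(i,j)\in\mu$ with $i=\imath$ one has $h^{\gl,\mu}_{\imath,j}=h^{\mu}_{\imath,j}+1=h^{\gl}_{\imath,j}$ and $h^{\mu,\gl}_{\imath,j}=h^{\mu}_{\imath,j}$; symmetrically for $j=\jmath$. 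Together with $h^{\gl}_{\imath,\jmath}=1$ and the observation that $h^{\mu,\gl}_{i,j}>0$ for $(i,j)\in\mu$ (so $\ga$ is the unique pole on the left), the limit computes exactly $\prod_{(i,j)\in\mu}h^{\mu}_{i,j}\big/\prod_{(i,j)\in\gl}h^{\gl}_{i,j}$. Your parity argument for the second equality is also correct.

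This is a genuinely different route from the paper's. The paper proves Lemma~\ref{Lemm:theta-lambda-mu} by a direct, self-contained telescoping through the block structure $\mu_{i_{k-1}+1}=\cdots=\mu_{i_k}$, exactly the ``alternative'' you mention at the end; in effect it repeats, in the single-partition setting, the same kind of calculation already done for Lemma~\ref{Lemm:Theta-gl-mu=nu}. Your approach instead reuses Lemma~\ref{Lemm:Theta-gl-mu=nu} as a black box and recovers the present lemma as its $y\to x$ degeneration, trading the telescoping for a short hook-length comparison between $\gl$ and $\mu$. The upside of your method is economy (no second telescoping) and a conceptual explanation of why the two lemmas have the same shape; the paper's method has the virtue of being independent of Lemma~\ref{Lemm:Theta-gl-mu=nu}.
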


\begin{proof} Suppose that $\ga=(\imath,\jmath)$ and that \begin{align*}
    \mu=\(\mu_1=\cdots=\mu_{i_1}>\mu_{i_1+1}=\cdots=\mu_{i_2}>\cdots\cdots>\mu_{i_{p-1}+1}=\cdots=\mu_{i_p}>0\).
  \end{align*}
 We set $i_0=0$, $\mu_{i_{p+1}}=0$. Therefore we have 
  \begin{align*}
   \sr(\mu)&=\left\{(i_k,\mu_{i_k})\mid k=1,\cdots, p \right\}; \\ \sa(\mu)&=\left\{(i_k+1, \mu_{i_{k+1}}+1)\mid k=0, \cdots, p\right\}.\end{align*}
  Since $\ga$ is removable, $\imath=i_q+1$ and $\jmath=\mu_{i_q+1}+1$ for some $0\leq q\leq p+1$. Furthermore, we yield that
   \begin{align*}
   &\gl_i=\mu_i  \text{ for } i\neq \imath;&& \hat{\gl}_j=\hat{\mu}_j \text{ for }j\neq \jmath;
    &&\gl_{\imath}=\mu_{\imath}+1=\jmath;&& \hat{\gl}_{\jmath}=\hat{\mu}_{\jmath}+1=\imath.
  \end{align*}
   As a consequence, 
          \begin{align*}
   \frac{\prod_{(i,j)\in\mu}h^{\mu}_{i,j}}
   {\prod_{(i,j)\in\gl} h^{\gl}_{i,j}}
   &=\frac{\prod_{(i,j)\in\mu}\mu_i+\hat{\mu}_j-i-j+1}
             {\prod_{(i,j)\in\gl}\gl_i+\hat{\gl}_j-i-j+1}\\
  &=\prod_{(i,\jmath)\in\mu}\frac{\mu_i-i+\imath-\jmath}{\mu_i-i+\imath-\jmath+1}
      \prod_{(\imath,j)\in\mu}\frac{\hat{\mu}_j-j+\jmath-\imath}{\hat{\mu}_j-j+\jmath-\imath+1}\\
  &=\prod_{i=1}^{i_q}\frac{\mu_i-i+\imath-\jmath}{\mu_i-i+\imath-\jmath+1}
      \prod_{j=1}^{\mu_{i_{q+1}}}\frac{\hat{\mu}_j-j+\jmath-\imath}{\hat{\mu}_j-j+\jmath-\imath+1}\\
  &=\biggl(\prod_{k=1}^q\prod_{i=i_{k-1}+1}^{i_k}\frac{\mu_i-i+\imath-\jmath}
    {\mu_i\!-\!i\!+\!\imath\!-\!\jmath\!+1}\biggr)\!\biggl(\prod_{k=q+1}^p\,\prod_{j=\mu_{i_{k+1}+1}}^{\mu_{i_{k}}}\frac{\hat{\mu}_j-j+\jmath-\imath}
    {\hat{\mu}_j\!-\!j\!+\!\jmath\!-\!\imath\!+\!1}\biggr)\\
 &=\prod_{k=1}^q\frac{\mu_{i_k}-i_k+\imath-\jmath}{\mu_{i_k}-i_{k-1}+\imath-\jmath}
     \prod_{k=q+1}^{p}\frac{i_k-\mu_{i_{k}}+\jmath-\imath}{i_k-\mu_{i_{k+1}}+\jmath-\imath}\\
 &=\prod_{k=1}^q\frac{\mu_{i_k}-i_k+\imath-\jmath}{\mu_{i_k}-i_{k-1}+\imath-\jmath}
    \prod_{k=q+1}^{p}\frac{\mu_{i_{k}}-i_k+\imath-\jmath}{\mu_{i_{k+1}}-i_{k}+\imath-\jmath}\\
    &=\prod_{\gb\in\sr(\mu)}\(\res(\gb)-\res(\ga)\){
     \prod_{\ga\neq\gma\in\sa(\mu)}\(\res(\gma)-\res(\ga)\)^{-1}}\\
     &=\prod_{\gb\in\sr(\mu)}\(\res(\ga)-\res(\gb)\){
  \prod_{\ga\neq\gma\in\sa(\mu)}\(\res(\ga)-\res(\gma)\)^{-1}}.
         \end{align*}
         As a consequence, we have completed the proof.
   \end{proof}
The following result gives another combinatorial reformulation for the rational function $\Theta_{\bgl}(Q)$ defined in \S\ref{Equ:theta-bgl}.

\begin{proposition}Assume that $\bgl$ is an $m$-multipartition of $n$ and that $\ft$ is a standard $\bgl$-tableau with the node $\ga$ containing the number $n$. Let $\bmu$ be the shape of the subtableau of $\ft$ by removing the node $\ga$. Then  
  \begin{equation*}
    \Theta_{\bgl}(Q)\Theta_{\bmu}(Q)^{-1}=\prod_{\gb\in\sr(\bmu)}\(\res(\gb)-\res_{\ft}(n)\)\prod_{\ga\neq \gma\in \sa(\bmu)}\(\res(\gma)-\res_{\ft}(n)\)^{-1}.  \end{equation*}
    \label{Prop:Theta-bgl-bmu} \end{proposition}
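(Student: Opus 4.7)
Write $\ga=(\imath,\jmath,c)$. The plan is to expand $\Theta_{\bgl}(Q)\Theta_{\bmu}(Q)^{-1}$ as a product of ratios coming from (\ref{Equ:theta-bgl}) and to reduce each to a previous combinatorial lemma. Since $\bmu=\bgl\setminus\{\ga\}$, we have $\mu^s=\gl^s$ for $s\neq c$ and $\mu^c=\gl^c\setminus\{(\imath,\jmath)\}$, so every factor of (\ref{Equ:theta-bgl}) indexed by a pair $(s,t)$ with $s\neq c$ and $t\neq c$ cancels in $\Theta_{\bgl}/\Theta_{\bmu}$. What remains splits into the ``diagonal'' piece from $s=t=c$ and, for each $t\neq c$, a pair contribution
\begin{equation*}
\Psi_t = \frac{\prod_{(i,j)\in\mu^c}(h^{\mu^c,\gl^t}_{i,j}+q_c-q_t)\prod_{(i,j)\in\gl^t}(h^{\gl^t,\mu^c}_{i,j}+q_t-q_c)}{\prod_{(i,j)\in\gl^c}(h^{\gl^c,\gl^t}_{i,j}+q_c-q_t)\prod_{(i,j)\in\gl^t}(h^{\gl^t,\gl^c}_{i,j}+q_t-q_c)}
\end{equation*}
combining the $(s,t)=(c,t)$ and $(s,t)=(t,c)$ factors.

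The diagonal $s=t=c$ piece reduces to the pure ratio of hook products $\prod_{(i,j)\in\mu^c}h^{\mu^c}_{i,j}\big/\prod_{(i,j)\in\gl^c}h^{\gl^c}_{i,j}$, and Lemma~\ref{Lemm:theta-lambda-mu} immediately rewrites this as $\prod_{\gb\in\sr(\mu^c)}(\res(\gb)-\res(\ga))\prod_{\ga\neq\gma\in\sa(\mu^c)}(\res(\gma)-\res(\ga))^{-1}$, where the nodes are viewed in component $c$ (the $q_c$ shifts cancel in every factor since both endpoints lie in the same component). For each $t\neq c$, $\gl^c$ and $\mu^c$ agree outside row $\imath$ and column $\jmath$, so numerator and denominator of $\Psi_t$ differ only at the node $\ga$, at the cells $(\imath,j)$ with $j<\jmath$ (where the row-length factor of the hook drops by one), and at $(i,\jmath)\in\gl^t$ (where the column-height factor drops by one). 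I would then run a telescoping argument in the style of Lemma~\ref{Lemm:theta-lambda-mu}: parameterize $\gl^t$ by the indices $k_0=0<k_1<\cdots<k_r=\ell(\gl^t)$ at which strict descents occur, so that $\sr(\gl^t)=\{(k_l,\gl^t_{k_l})\}_{l=1}^r$ and $\sa(\gl^t)=\{(k_l+1,\gl^t_{k_{l+1}}+1)\}_{l=0}^r$ (with $\gl^t_{k_{r+1}}=0$); then write the $j$-product (from row $\imath$) and the $i$-product (from column $\jmath$) in $\Psi_t$ as block-wise products over the intervals of constancy of $\hat{\gl^t}_j$ and $\gl^t_i$. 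Within each block the inner factor is of the form $\prod\frac{y}{y+1}$ and telescopes to the ratio of its endpoints. Together with the isolated factor $(h^{\gl^c,\gl^t}_\ga+q_c-q_t)^{-1}$ from the removed node $\ga$, the surviving boundary factors assemble into the residues of the addable and removable nodes of $\mu^t=\gl^t$ in component $t$, yielding
\begin{equation*}
\Psi_t=\prod_{\gb\in\sr(\mu^t)}(\res(\gb)-\res(\ga))\prod_{\gma\in\sa(\mu^t)}(\res(\gma)-\res(\ga))^{-1}.
\end{equation*}

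Combining the diagonal contribution with the $\Psi_t$ via $\sr(\bmu)=\bigsqcup_s\sr(\mu^s)$ and $\sa(\bmu)\setminus\{\ga\}=(\sa(\mu^c)\setminus\{\ga\})\sqcup\bigsqcup_{t\neq c}\sa(\mu^t)$ yields the stated formula. The main obstacle is the bookkeeping in $\Psi_t$: one must locate the index $l^*$ with $\gl^t_{k_{l^*+1}}<\jmath\leq\gl^t_{k_{l^*}}$, split the $j$-product at $l^*$, verify that adjacent blocks in both products telescope cleanly into the intended addable/removable residues without leaving spurious factors behind, and confirm that the isolated factor from $\ga$ combines with the partial block at $l^*$ to produce exactly the missing addable-node residue.
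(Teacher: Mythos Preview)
Your decomposition is exactly the one the paper uses: split $\Theta_{\bgl}/\Theta_{\bmu}$ into the diagonal piece $s=t=c$ and the off-diagonal pieces $\Psi_t$ for $t\neq c$, handle the diagonal piece with Lemma~\ref{Lemm:theta-lambda-mu}, and then combine via $\sr(\bmu)=\bigsqcup_s\sr(\mu^s)$ and $\sa(\bmu)\setminus\{\ga\}=\bigl(\sa(\mu^c)\setminus\{\ga\}\bigr)\sqcup\bigsqcup_{t\neq c}\sa(\mu^t)$. That part is identical to the paper's proof.

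The one place you diverge is in the treatment of $\Psi_t$. What you have written down is precisely the ratio $\Theta_{\gl^c,\mu^t}(q_c,q_t)\big/\Theta_{\mu^c,\mu^t}(q_c,q_t)$ in the notation of Lemma~\ref{Lemm:Theta-gl-mu=nu} (take $\gl=\gl^c$, $\mu=\mu^t=\gl^t$, $\nu=\mu^c$, $x=q_c$, $y=q_t$), and that lemma hands you
\[
\Psi_t=\prod_{\gb\in\sr(\mu^t)}\bigl(\res(\gb)-\res(\ga)\bigr)\prod_{\gma\in\sa(\mu^t)}\bigl(\res(\gma)-\res(\ga)\bigr)^{-1}
\]
immediately. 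The paper simply cites Lemma~\ref{Lemm:Theta-gl-mu=nu} here. Your plan to ``run a telescoping argument in the style of Lemma~\ref{Lemm:theta-lambda-mu}'' for $\Psi_t$ is correct---indeed, the computation you outline (parameterizing $\gl^t$ by its descent indices, splitting the row-$\imath$ and column-$\jmath$ products into constant blocks, telescoping each block, and absorbing the isolated factor from $\ga$) is exactly the proof of Lemma~\ref{Lemm:Theta-gl-mu=nu}---but you are reproving that lemma rather than invoking it. So your argument is right, just longer than it needs to be; recognizing $\Psi_t$ as an instance of Lemma~\ref{Lemm:Theta-gl-mu=nu} removes all the bookkeeping you flag as ``the main obstacle.''
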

\begin{proof}Suppose that $\bgl=(\gl^1; \dots;\gl^m )$ and $\ga=(\imath, \jmath,c)$. Then $\ga$ is removable and \begin{align*}
 \bmu=(\mu^1;\dots; \mu^m)=(\gl^1;\dots;\gl^{c-1}; \mu^c; \gl^{c+1};\dots; \gl^m),
\end{align*}
where $\mu^c=\(\gl^c_{1}, \dots, \gl^c_{\imath-1}, \gl^c_{\imath}-1,\gl^c_{\imath+1},\dots,\gl^c_{\ell(\gl^c)}\)$. Observe that $\sa(\mu^t)=\sa(\gl^t)$ and $\sr(\mu^t)=\sr(\gl^t)$ for $1\leq t\neq c\leq m$.

Applying the equality (\ref{Equ:theta-bgl}), we obtain that
  \begin{align*}
   \frac{\Theta_{\bgl}(Q)}{\Theta_{\bmu}(Q)}&=\frac{\displaystyle\prod_{1\leq s\leq m}\prod_{(i,j)\in\mu^s}\prod_{1\leq t\leq m}\(h^{\mu^s,\mu^t}_{i,j}+q_s-q_t\)}{\displaystyle\prod_{1\leq s\leq m}\prod_{(i,j)\in\gl^s}\prod_{1\leq t\leq m}\(h^{\gl^s,\gl^t}_{i,j}+q_s-q_t\)}\\
     &=\frac{\prod_{(i,j)\in\mu^c}h^{\mu^c}_{i,j}}{\prod_{(i,j)\in\gl^c}h^{\gl^c}_{i,j}}\,\prod_{\substack{1\leq t\leq m\,\&\,t\neq c}}\frac{\Theta_{\gl^c,\mu^t}(q_c, q_t)}{\Theta_{\mu^c,\mu^t}(q_c, q_t)}\\
   &=\frac{\prod_{\gb\in\sr(\mu^c)}\(\res(\gb)-\res(\ga)\)}
   {\prod_{\ga\neq\gma\in\sa(\mu^c)}\(\res(\gb)-\res(\ga)\)}\prod_{\substack{1\leq t\leq m\,\&\,t\neq c}}\frac{\prod_{\gb\in\sr(\mu^t)}\(\res(\gb)-\res(\ga)\)}
   {\prod_{\gma\in\sa(\mu^t)}\(\res(\gb)-\res(\ga)\)}\\
   & =\prod_{\gb\in\sr(\bmu)}\(\res(\gb)-\res(\ga)\)\prod_{\ga\neq \gma\in \sa(\bmu)}\(\res(\gma)-\res(\ga)\)^{-1},
  \end{align*}
  where the third equality follows by applying Lemmas~\ref{Lemm:Theta-gl-mu=nu} and \ref{Lemm:theta-lambda-mu}. We have completed the proof.
\end{proof}

\section{Fusion formulae for primitive idempotents}\label{Sec:Fusion-formulae}
From now on, the following notations will be used throughout unless otherwise stated.
\begin{notations}\label{Subsec:notations} $\bgl=(\gl^1;\cdots;\gl^m)$ is an $m$-multipartition of $n$ and $\ft$ is a standard $\bgl$-tableau with the number $n$ appearing in the node $\ga=(\imath,\jmath,c)$. We let $\fu$ be the subtableau of $\ft$ by removing the node $\ga$ and let $\bmu=\shape(\fu)$. Denote by $\fv$ the subtableau of $\fu$ which contains the numbers $1, \cdots, n-1$ and denote by $\bnu=\shape(\fv)$.
 \end{notations}

Let $\bgl$ be an $m$-multipartition of $n$ and let $S^{\bgl}$ be the Specht module corresponding to $\bgl$.  Then $S^{\bgl}$ admits the following decomposition, as a vector space,
 \begin{align*}
   S^{\bgl}=\bigoplus_{\ft\in\std(\bgl)}v_{\ft},
 \end{align*}which is equipped with a Young seminorm form and the Jucys-Murphy elements act
 diagonally in this basis.

If $\h$ is generic then $\{S^{\bgl}\mid\bgl \in \mpn\}$ is a complete set of pairwise
non-isomorphic irreducible $\h$-modules. For a standard $\bgl$-tableau $\ft$,  we denote by $E_{\ft}$ the corresponding primitive idempotent of $\h$. For all $i=1, \cdots, n$, we have
\begin{equation}\label{Equ:JiE-ft}
  J_iE_{\ft}=E_{\ft}J_i=\res_{\ft}(i)E_{\ft}.
\end{equation}
Note that the idempotent $E_{\ft}$ can be expressed in terms of the Jucys-Murphy elements.  Indeed, the inductive formula for $E_{\ft}$ in terms of Jucys-Murphy elements can be formulated as following:
\begin{equation*}
  E_{\ft}=E_{\fu}\prod_{\substack{\ga\neq \gb\in\sa\left(\bmu\right)}}\displaystyle\frac{J_n-\res(\gb)}{\res_{\ft}(n)-\res(\gb)},
\end{equation*}
with the initial condition $E_{\emptyset}=1$, which is well-defined thanks to Lemma~\ref{Lemm:Separation}.

 On the other hand, let $\ft_1, \cdots, \ft_a$ be the set of pairwise different standard $\bgl$-tableaux obtained for $\fu$ by adding an node with number $n$. Then the branching properties of the Young basis imply that
\begin{equation*}E_{\fu}=\sum_{i=1}^aE_{\ft_i}\end{equation*} and the rational function $E_{\fu}\frac{z-\res_{\ft}(n)}{z-J_n}$ in $z$ is well-defined, which is non-singular at $z=\res_{\ft}(n)$ according to (\ref{Equ:JiE-ft}). Furthermore, we have
\begin{equation}\label{Equ:E_ft=E_fu}
  E_{\ft}=\biggl.E_{\fu}\frac{z-\res_{\ft}(n)}{z-J_n}\biggr|_{z=\res_{\ft}(n)}.
\end{equation}

 We first define the following rational function in variable $z$:

  \begin{equation}\label{Equ:theta-def}
    \Theta_{\ft}(z):=\frac{z-\res_{\ft}(n)}{f(z)}\prod_{i=1}^{n-1}\frac{{\(}z-\res_{\ft}(i){\)}^2}
    {\(z-\res_{\ft}(i)+1\)\(z-\res_{\ft}(i)-1\)}.
  \end{equation}

Clearly if $n=1$ then $\Theta_{\ft}(z)=\frac{z-\res_{\ft}(1)}{f(z)}$.

\begin{lemma}\label{Lemm:theta-z}
  Keep notations as in \S\ref{Subsec:notations}. Then
  \begin{equation*}\label{Equ:theta-z}
    \Theta_{\ft}(z)=\(z-\res_{\ft}(n)\)\prod_{\gb\in\sr(\bmu)}\(z-\res(\gb)\)\prod_{\gma\in \sa(\bmu)}\(z-\res(\gma)\)^{-1}.  \end{equation*}
\end{lemma}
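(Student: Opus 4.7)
The plan is induction on $n$. For the base case $n=1$, we have $\bmu=\emptyset$ and $\ga=(1,1,c)$ for some $c$; thus $\sr(\bmu)=\emptyset$ while $\sa(\bmu)=\{(1,1,c'):1\le c'\le m\}$ has residues $q_1,\dots,q_m$, so both sides of the asserted formula reduce to $(z-q_c)/f(z)$.

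For the inductive step, since $\res_\ft(i)=\res_\fu(i)$ for $1\le i\le n-1$, the definition (\ref{Equ:theta-def}) of $\Theta_\ft(z)$ immediately gives
\begin{equation*}
\frac{\Theta_\ft(z)}{\Theta_\fu(z)}=\frac{(z-\res_\ft(n))(z-r)}{(z-r+1)(z-r-1)},\qquad r:=\res_\ft(n-1).
\end{equation*}
Applying the inductive hypothesis to $\Theta_\fu(z)$ and comparing with the target formula for $\Theta_\ft(z)$, the lemma reduces to the combinatorial identity
\begin{equation*}
\frac{\prod_{\gb\in\sr(\bmu)}(z-\res(\gb))\prod_{\gma\in\sa(\bnu)}(z-\res(\gma))}{\prod_{\gma\in\sa(\bmu)}(z-\res(\gma))\prod_{\gb\in\sr(\bnu)}(z-\res(\gb))}=\frac{(z-r)^2}{(z-r+1)(z-r-1)},
\end{equation*}
which records how the ratio $\prod_{\sr}(z-\res)/\prod_{\sa}(z-\res)$ transforms when the single node $\gb'=(\imath,\jmath,c')$ of residue $r$ is adjoined to $\bnu$ to form $\bmu$.

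To establish this identity I will track the changes in $\sa$ and $\sr$; by construction these only affect the component $c'$. The node $\gb'$ itself leaves $\sa(\bnu)$ and enters $\sr(\bmu)$, producing the factor $(z-r)^2$ in the numerator. The only other nodes whose addable/removable status can change are the four neighbours $(\imath\pm1,\jmath,c')$ and $(\imath,\jmath\pm1,c')$, all of residue $r+1$ or $r-1$. A direct inspection of the inequalities defining addable and removable nodes (using $\nu^{c'}_\imath=\jmath-1$ and $\nu^{c'}_{\imath-1}\ge\jmath$) shows that at residue $r+1$ exactly one of $(\imath-1,\jmath,c')\in\sr(\bnu)$ and $(\imath,\jmath+1,c')\in\sa(\bmu)$ holds, contributing a single factor $(z-r-1)$ to the denominator; the symmetric analysis at residue $r-1$ using $\nu^{c'}_{\imath+1}\le\jmath-1$ produces exactly one factor $(z-r+1)$ in the denominator, coming from either $(\imath,\jmath-1,c')\in\sr(\bnu)$ or $(\imath+1,\jmath,c')\in\sa(\bmu)$. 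The main obstacle is the careful bookkeeping of these mutually exclusive dichotomies at the adjacent residues $r\pm1$, including the degenerate boundary cases $\imath=1$ and $\jmath=1$; however, the verification is elementary once the complementarity is identified.
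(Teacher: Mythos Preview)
Your proof is correct and follows essentially the same inductive strategy as the paper: establish the base case $n=1$ directly, then for the inductive step compute the ratio $\Theta_\ft(z)/\Theta_\fu(z)$ from the definition and reduce to tracking how $\sa$ and $\sr$ change when the node $\gb'$ of residue $r=\res_\ft(n-1)$ is adjoined to $\bnu$ to form $\bmu$. The paper organizes this bookkeeping as a four-case enumeration according to whether each of $(\imath-1,\jmath,c')$ and $(\imath,\jmath-1,c')$ lies in $\sr(\bnu)$, whereas you decouple it into two independent dichotomies at residues $r+1$ and $r-1$; these are equivalent presentations of the same elementary verification. One cosmetic point: your use of $(\imath,\jmath,c')$ for the node containing $n-1$ clashes with Notations~\ref{Subsec:notations}, where $(\imath,\jmath,c)$ denotes the node containing $n$; the paper itself overloads $\ga$ similarly in its proof, so this is harmless, but you may wish to choose fresh symbols.
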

\begin{proof}
  We prove  the lemma by induction on $n$. If $n=1$ then $\bmu=\emptyset$, $\sr(\bmu)=\emptyset$, and $\sa(\bmu)=\{(1,1,i)|1\leq i\leq m\}$. Thus the lemma follows directly by using the equality (\ref{Equ:theta-def}) for $n=1$.

  Now assume that the lemma holds for all standard tableaux $\ft$ with $n-1\geq 1$ nodes. We show that it also holds for the standard tableaux with $n$ nodes. Suppose that the node $\ga=(a,b,c)$ of $\ft$ contains the number $n-1$. Then we have the following cases:
\noindent  \begin{enumerate}
    \item If $(a-1,b,c), (a,b-1,c)\notin \sr(\bnu)$ then $\sr(\bmu)=\sr(\bnu)\cup\{\ga\}$ and
       \begin{align*}\sa(\bmu)=\(\sa(\bnu)\cup\{(a+1,b,c),(a,b+1,c)\}\)\setminus \{\ga\}.\end{align*}

   \item If $(a-1,b,c)\in \sr(\bnu)$ and $(a,b-1,c)\notin\sr(\bnu)$, then \begin{align*}&\sr(\bmu)=\(\sr(\bnu)\cup\{(a,b,c)\}\)\setminus \{(a,b,c)\};\\ &\sa(\bmu)=\(\sa(\bnu)\cup\{(a+1,b,c)\}\)\setminus \{\ga\}.\end{align*}
  \item If $(a-1,b,c)\notin \sr(\bnu)$ and $(a,b-1,c)\in\sr(\bnu)$, then \begin{align*}&\sr(\bmu)=\(\sr(\bnu)\cup\{\ga\}\)\setminus \{(a,b-1,c)\};\\ &\sa(\bmu)=\(\sa(\bnu)\cup\{(a+1,b,c)\}\)\setminus \{\ga\}.\end{align*}
   \item If $(a-1,b,c), (a,b-1,c)\in\sr(\bnu)$, then $\sa(\bmu)=\sa(\bnu)\setminus \{\ga\}$ and \begin{align*}\sr(\bmu)=\(\sr(\bnu)\cup\{\ga\}\)\setminus \{(a-1,b,c), (a,b-1,c)\}.\end{align*}
  \end{enumerate}
 Now applying the induction argument, we obtain
  \begin{align*}
    \Theta_{\ft}(z)&=\frac{(z-\res_{\ft}(n))(z-\res_{\ft}(n-1))^2}{(z-\res_{\ft}(n)+1)(z-\res_{\ft}(n)-1)}
    \prod_{\gb\in\sr(\bnu)}\(z-\res(\gb)\)\prod_{\gma\in\sa(\bnu)}\(z-\res(\gma)\)^{-1}\\
    &=\(z-\res_{\ft}(n)\)\prod_{\gb\in\sr(\bmu)}\(z-\res(\gb)\)\prod_{\gma\in \sa(\bmu)}\(z-\res(\gma)\)^{-1}.
  \end{align*}
  We complete the proof.
\end{proof}

The following lemma establishes the relationship between the rational function $\Theta_{\bgl}(Q)$ and the rational function $\Theta_{\ft}(z)$, which is crucial to the fusion formula.
\begin{lemma}\label{Lemm:Theta_ft-bgl-bmu}
  The rational function $\Theta_{\ft}(z)$ is non-singular at $z=\res_{\ft(n)}$ and $$\Theta_{\ft}\(\res_{\ft}(n)\)=\Theta_{\bgl}(Q)\Theta_{\bmu}(Q)^{-1}.$$
\end{lemma}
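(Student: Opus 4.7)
The plan is to substitute the explicit formula for $\Theta_{\ft}(z)$ given by Lemma~\ref{Lemm:theta-z} and then evaluate, recognizing the result via Proposition~\ref{Prop:Theta-bgl-bmu}. Concretely, Lemma~\ref{Lemm:theta-z} yields
\begin{equation*}
\Theta_{\ft}(z)=(z-\res_{\ft}(n))\prod_{\gb\in\sr(\bmu)}(z-\res(\gb))\prod_{\gma\in\sa(\bmu)}(z-\res(\gma))^{-1}.
\end{equation*}
Because $\bmu=\bgl-\{\ga\}$, the node $\ga$ belongs to $\sa(\bmu)$ and satisfies $\res(\ga)=\res_{\ft}(n)$. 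The key step is to isolate the factor indexed by $\gma=\ga$ in the product over $\sa(\bmu)$: its denominator $z-\res(\ga)$ cancels precisely against the prefactor $z-\res_{\ft}(n)$, and one obtains
\begin{equation*}
\Theta_{\ft}(z)=\prod_{\gb\in\sr(\bmu)}(z-\res(\gb))\prod_{\ga\neq\gma\in\sa(\bmu)}(z-\res(\gma))^{-1}.
\end{equation*}
This shows $\Theta_{\ft}(z)$ is regular at $z=\res_{\ft}(n)$, provided no other addable node of $\bmu$ has residue equal to $\res(\ga)$. For two addable nodes of a single component lying on the same diagonal would coincide, while addable nodes in different components have residues differing by an expression of the form $d+q_{s}-q_{t}$ with $|d|\leq n$, which is nonzero by the separation condition imposed on $\h$. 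Hence the surviving denominators are nonzero at $z=\res_{\ft}(n)$.

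Evaluating at $z=\res_{\ft}(n)$ then gives
\begin{equation*}
\Theta_{\ft}(\res_{\ft}(n))=\prod_{\gb\in\sr(\bmu)}\bigl(\res_{\ft}(n)-\res(\gb)\bigr)\prod_{\ga\neq\gma\in\sa(\bmu)}\bigl(\res_{\ft}(n)-\res(\gma)\bigr)^{-1},
\end{equation*}
and this is recognized as $\Theta_{\bgl}(Q)\Theta_{\bmu}(Q)^{-1}$ by Proposition~\ref{Prop:Theta-bgl-bmu}.

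In essence the proof is pure bookkeeping; the analytic content sits in Lemma~\ref{Lemm:theta-z} and Proposition~\ref{Prop:Theta-bgl-bmu}. I do not expect any genuine obstacle, but the one step meriting explicit care is the non-singularity claim: one must verify that the factor of $(z-\res_{\ft}(n))^{-1}$ hidden inside $\prod_{\gma\in\sa(\bmu)}(z-\res(\gma))^{-1}$ comes \emph{exclusively} from $\gma=\ga$, so that a single numerator factor suffices to cancel the pole. The separation condition guarantees this uniqueness.
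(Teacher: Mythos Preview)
Your proof is correct and follows essentially the same route as the paper: invoke Lemma~\ref{Lemm:theta-z}, cancel the prefactor $z-\res_{\ft}(n)$ against the $\gma=\ga$ term in the addable-node product, then identify the result via Proposition~\ref{Prop:Theta-bgl-bmu}. Your explicit justification that no other addable node of $\bmu$ shares the residue $\res(\ga)$ (hence the pole is simple and fully cancelled) is a welcome elaboration; the paper's proof leaves this implicit in its appeal to Lemma~\ref{Lemm:theta-z}.
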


\begin{proof}Lemma~\ref{Lemm:theta-z} shows that the rational function $\Theta_{\ft}(z)$ is non-singular at $z=\res_{\ft}(n)$. Noticing that the node $\ga$ is removable and applying Proposition~\ref{Prop:Theta-bgl-bmu}, we obtain that
\begin{align*}
  \Theta_{\ft}\(\res_{\ft}(n)\)&=\prod_{\gb\in\sr(\bmu)}\(\res_{\ft}(n)-\res(\gb)\)\prod_{\ga\neq\gma\in\sa(\bmu)
 }\(\res_{\ft}(n)-\res(\gma)\)^{-1}\\
  &=\Theta_{\bgl}(Q)\Theta_{\bmu}(Q)^{-1}.
\end{align*}
It completes the proof.\end{proof}

Let $\phi_1(z)=t(z)$ and define \begin{equation*}\begin{aligned}
  \phi_{k+1}(z_1, \cdots, z_k;z)&:=t_k(z,z_k)\phi_k(z_1, \cdots, z_{k-1};z)t_k\\
  &=t_k(z,z_k)t_{k-1}(z,z_{k-1})\cdots t_1(z,z_1)t_1\cdots t_k.\end{aligned}
\end{equation*}

\begin{lemma}\label{Lemm:Theta-phi-E}
  Keep notations as in \S\ref{Subsec:notations}. Then
  \begin{equation}\label{Equ:Theta-phi-E}
    \Theta_{\ft}(z)\phi_n\(\res_{\ft}(1), \cdots,\res_{\ft}(n-1),z\)E_{\fu}=\frac{z-\res_{\ft}(n)}{z-J_n}E_{\fu}.
  \end{equation}
\end{lemma}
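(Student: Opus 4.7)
The plan is to proceed by induction on $n$. The base case $n=1$ is immediate: here $\fu=\emptyset$, $E_{\fu}=1$, $\phi_1(z)=t(z)=f(z)/(z-J_1)$, and $\Theta_{\ft}(z)=(z-\res_{\ft}(1))/f(z)$, so both sides of (\ref{Equ:Theta-phi-E}) collapse to $(z-\res_{\ft}(1))/(z-J_1)$.

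For the inductive step, I first apply the inductive hypothesis to $\fu$ (so that $\fv$ plays the role of $\fu$), giving
\begin{equation*}
\Theta_{\fu}(z)\phi_{n-1}(z_1,\dots,z_{n-2};z)E_{\fv}=\frac{z-z_{n-1}}{z-J_{n-1}}E_{\fv},
\end{equation*}
and then right-multiply by $t_{n-1}E_{\fu}$. Three standard observations make everything collapse cleanly: $E_{\fv}\in\h_{m,n-2}$ commutes with $t_{n-1}$, since the latter commutes with each of the generators $t,t_1,\dots,t_{n-3}$; $E_{\fv}$ commutes with $(z-J_{n-1})^{-1}$, since $J_{n-1}$ lies in the centralizer of $\h_{m,n-2}$ inside $\h_{m,n-1}$; and $E_{\fv}E_{\fu}=E_{\fu}$. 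The outcome is the intermediate identity
\begin{equation*}
\Theta_{\fu}(z)\phi_{n-1}(z_1,\dots,z_{n-2};z)t_{n-1}E_{\fu}=\frac{z-z_{n-1}}{z-J_{n-1}}t_{n-1}E_{\fu}.
\end{equation*}

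Combined with the recursion $\phi_n=t_{n-1}(z,z_{n-1})\phi_{n-1}t_{n-1}$, the lemma reduces to evaluating
\begin{equation*}
t_{n-1}(z,z_{n-1})\,\frac{z-z_{n-1}}{z-J_{n-1}}\,t_{n-1}E_{\fu}.
\end{equation*}
The key input is the Jucys--Murphy commutation $(z-J_{n-1})t_{n-1}=t_{n-1}(z-J_n)+1$, immediate from $J_n=t_{n-1}J_{n-1}t_{n-1}+t_{n-1}$ and $t_{n-1}^2=1$. Rearranged, it reads
\begin{equation*}
(z-J_{n-1})^{-1}t_{n-1}=t_{n-1}(z-J_n)^{-1}-(z-J_{n-1})^{-1}(z-J_n)^{-1};
\end{equation*}
substituting, then using $J_{n-1}E_{\fu}=z_{n-1}E_{\fu}$ together with the commutation of $J_n$ with $E_{\fu}$, the two occurrences of $t_{n-1}E_{\fu}(z-J_n)^{-1}$ produced by expanding $t_{n-1}(z,z_{n-1})=(z-z_{n-1})^{-1}+t_{n-1}$ cancel via $t_{n-1}^2=1$, and the surviving scalar pieces combine into
\begin{equation*}
t_{n-1}(z,z_{n-1})\,\frac{z-z_{n-1}}{z-J_{n-1}}\,t_{n-1}E_{\fu}=\frac{(z-z_{n-1})^2-1}{(z-z_{n-1})(z-J_n)}E_{\fu}.
\end{equation*}

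Finally, a direct calculation from (\ref{Equ:theta-def}) gives
\begin{equation*}
\frac{\Theta_{\ft}(z)}{\Theta_{\fu}(z)}=\frac{(z-\res_{\ft}(n))(z-z_{n-1})}{(z-z_{n-1})^2-1},
\end{equation*}
and multiplying the previous display by this ratio yields $(z-\res_{\ft}(n))/(z-J_n)\cdot E_{\fu}$, completing the induction. The main obstacle will be the cancellation of the two $t_{n-1}$-linear terms in the computation above: it succeeds only because the weighting $z-z_{n-1}$ is tuned exactly to the eigenvalue relation $J_{n-1}E_{\fu}=z_{n-1}E_{\fu}$, and because the involution $t_{n-1}^2=1$ reduces the one surviving $t_{n-1}^2 E_{\fu}$ to $E_{\fu}$.
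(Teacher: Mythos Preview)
Your proof is correct and follows essentially the same inductive strategy as the paper's own argument: insert $E_{\fv}$ via $E_{\fv}E_{\fu}=E_{\fu}$, commute it past $t_{n-1}$, apply the induction hypothesis for $\fu$, and then finish using the Jucys--Murphy recursion together with $J_{n-1}E_{\fu}=z_{n-1}E_{\fu}$. The only organizational difference lies in the final step: the paper absorbs the scalar prefactor into $t_{n-1}(z,z_{n-1})$ via the Baxterized inverse relation $t_{n-1}(x,y)t_{n-1}(y,x)=1-(x-y)^{-2}$, reducing everything to the polynomial identity $t_{n-1}(z-J_n)E_{\fu}=(z-J_{n-1})t_{n-1}(z_{n-1},z)E_{\fu}$; you instead expand directly through the resolvent identity $(z-J_{n-1})^{-1}t_{n-1}=t_{n-1}(z-J_n)^{-1}-(z-J_{n-1})^{-1}(z-J_n)^{-1}$ and track the cancellation explicitly. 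These are equivalent rearrangements of the same computation. (One small remark: your second ``standard observation''---that $E_{\fv}$ commutes with $(z-J_{n-1})^{-1}$---is true but not actually needed, since $E_{\fv}t_{n-1}E_{\fu}=t_{n-1}E_{\fu}$ already follows from the other two observations alone.)
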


\begin{proof}
  We prove the equality (\ref{Equ:Theta-phi-E}) by induction on $n$. Using (\ref{Equ:def-t(z)}), (\ref{Equ:JM-def}) and (\ref{Equ:theta-def}), we obtain
  \begin{align*}&
    \frac{z-\res_{\ft}(1)}{z-J_1}E_{\emptyset}=\frac{z-\res_{\ft}(1)}{f(z)}\cdot \frac{f(z)}{z-t_0}E_{\emptyset}=\Theta_{\ft}(z)\phi_1(z)E_{\emptyset}.
  \end{align*}
That is the equality holds for $n=1$.

Note that $E_{\fv}E_{\fu}=E_{\fu}$ and $E_{\fu}t_{n-1}=t_{n-1}E_{\fu}$.  Then, by induction hypothesis, the left hand side of the equality (\ref{Equ:Theta-phi-E}) can be rewritten as
 \begin{align*}\mathrm{LHS}&=\negmedspace\Theta_{\ft}(z)t_{n\!-\!1}\(z,\res_{\ft}( n\negmedspace-\negmedspace1)\)
 \phi_{n\!-\!1}\(\res_{\ft}(1), \cdots,\res_{\ft}( n\negmedspace-\negmedspace2),z\)t_{n-1}E_{\fv}E_{\fu}\\
 &=\negmedspace\Theta_{\ft}(z)t_{n-1}(z,\res_{\ft}(n-1))\phi_{n-1}\(\res_{\ft}(1), \cdots,\res_{\ft}(n\negmedspace-\negmedspace2),z\)
 E_{\fv}t_{n-1}E_{\fu}\\
 &=\negmedspace\frac{\Theta_{\ft}(z)}{\Theta_{\fu}(z)}t_{n-1}\(z,\res_{\ft}(n\negmedspace-\negmedspace1)\)
 \(\Theta_{\fu}(z)\phi_{n-1}(\res_{\ft}(1), \cdots,\res_{\ft}(n\negmedspace-\negmedspace2),z)E_{\fv}\)t_{n-1}E_{\fu}\\
 &=\negmedspace\Theta_{\ft}(z)\Theta_{\fu}(z)^{-1}t_{n-1}(z,\res_{\ft}(n-1))
 \frac{z-\res_{\ft}(n-1)}{z-J_{n-1}}E_{\fv}t_{n-1}E_{\fu}\\
 &=\negmedspace\Theta_{\ft}(z)\Theta_{\fu}(z)^{-1}t_{n-1}(z,\res_{\ft}(n-1))
 \frac{z-\res_{\ft}(n-1)}{z-J_{n-1}}t_{n-1}E_{\fu}\\
 &=\negmedspace\frac{(z-\res_{\ft}(n-1))^2t_{n-1}(z,\res_{\ft}(n-1))}{(z-\res_{\ft}(n-1)+1)(z-\res_{\ft}(n-1)+1)}
 \frac{z-\res_{\ft}(n)}{z-J_{n-1}}t_{n-1}E_{\fu}\\
 &=t_{n-1}(\res_{\ft}(n-1),z)^{-1}\frac{z-\res_{\ft}(n)}{z-J_{n-1}}t_{n-1}E_{\fu}.
   \end{align*}
Note that $J_n$ commutes with $E_{\fu}$. Therefore, to prove the equality (\ref{Equ:Theta-phi-E}), it suffices to show that
 \begin{align*}
   &t_{n-1}(z-J_n)E_{\fu}=(z-J_{n-1})t_{n-1}(\res_{\ft}(n-1),z)E_{\fu},
 \end{align*}
 which follows directly by using the equalities (\ref{Equ:t_{ij}(x,y)}) and (\ref{Equ:JM-def}). It completes the proof.
\end{proof}

Define the following rational function with values in the algebra $\h$:
\begin{equation}
  \Phi(z_1, \cdots, z_n):=\phi_n(z_1, \cdots,z_{n-1}, z_n)\phi_{n-1}(z_1, \cdots, z_{n-1})\cdots\phi_1(z_1).
\end{equation}

Now we can prove the main result of this paper.

\begin{theorem}\label{Them:Main}
  Let $\bgl$ be an $m$-multipartition of $n$ and $\ft$ a standard $\bgl$-tableau. Then the primitive idempotent $E_{\ft}$ of $\h_{m,n}(Q)$ corresponding to $\ft$ can be obtained by the following consecutive evaluations
  \begin{equation*}
    E_{\ft}= \Theta_{\bgl}(Q)\Phi(z_1, \cdots, z_n)\biggl.\biggr|_{z=\res_{\ft}(1)}\cdots\, \biggl. \biggr|_{z=\res_{\ft}(n\!-\!1)}\biggl.\biggr|_{z=\res_{\ft}(n)}.
  \end{equation*}
   \end{theorem}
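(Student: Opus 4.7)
The plan is to proceed by induction on $n$, exploiting the factorisation $\Phi(z_1,\ldots,z_n)=\phi_n(z_1,\ldots,z_{n-1},z_n)\,\Phi(z_1,\ldots,z_{n-1})$ in parallel with the branching rule (\ref{Equ:E_ft=E_fu}) that expresses $E_\ft$ in terms of $E_\fu$. For the base case $n=1$, the tableau $\ft$ consists of a single node in some component $c$, giving $\res_\ft(1)=q_c$ and, directly from (\ref{Equ:theta-bgl}), $\Theta_{\bgl}(Q)=\prod_{t\ne c}(q_c-q_t)^{-1}$. Since $f(t)=0$ in $\h_{m,1}(Q)$, $t(z)$ coincides with the polynomial $(f(z)-f(t))/(z-t)$, whose specialisation at $z=q_c$ is $\prod_{i\ne c}(t-q_i)$; multiplying by $\Theta_{\bgl}(Q)$ recovers the interpolation idempotent $\prod_{i\ne c}(t-q_i)/(q_c-q_i)=E_\ft$.

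For the inductive step, assume the theorem for $n-1$ and apply it to the subtableau $\fu$ of shape $\bmu$. Because $\Phi(z_1,\ldots,z_{n-1})$ is independent of $z_n$ and $\res_\fu(i)=\res_\ft(i)$ for $i<n$, the first $n-1$ consecutive evaluations of $\Theta_{\bgl}(Q)\,\Phi(z_1,\ldots,z_n)$ produce, via the induction hypothesis,
\begin{equation*}
\Theta_{\bgl}(Q)\Theta_{\bmu}(Q)^{-1}\,\phi_n\bigl(\res_\ft(1),\ldots,\res_\ft(n-1),z_n\bigr)\,E_\fu.
\end{equation*}
Lemma~\ref{Lemm:Theta-phi-E} then rewrites $\phi_n(\res_\ft(1),\ldots,\res_\ft(n-1),z_n)\,E_\fu$ as $\Theta_\ft(z_n)^{-1}(z_n-\res_\ft(n))(z_n-J_n)^{-1}E_\fu$, while Lemma~\ref{Lemm:Theta_ft-bgl-bmu}---combined with the non-vanishing of $\Theta_\ft(z_n)$ at $z_n=\res_\ft(n)$ furnished by Lemma~\ref{Lemm:theta-z}---collapses the scalar prefactor $\Theta_{\bgl}(Q)\Theta_{\bmu}(Q)^{-1}\Theta_\ft(z_n)^{-1}$ to $1$ after the final evaluation. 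The surviving factor $(z_n-\res_\ft(n))(z_n-J_n)^{-1}E_\fu\bigl|_{z_n=\res_\ft(n)}$ equals $E_\ft$ by (\ref{Equ:E_ft=E_fu}), which closes the induction.

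The delicate point is the well-definedness of the consecutive evaluations: the factors $t_k(z_n,z_k)=(z_n-z_k)^{-1}+t_k$ inside $\phi_n$ acquire genuine simple poles on the diagonals $z_n=z_k$ after substituting $z_k=\res_\ft(k)$, and residues of distinct entries of a standard tableau can coincide. These singularities are resolved only after multiplication by $E_\fu$, which is why Lemma~\ref{Lemm:Theta-phi-E} is indispensable---it packages the entire product $\phi_n\cdot E_\fu$ into a form in which the cancellation is transparent, reducing matters to the clean scalar identity $\Theta_\ft(\res_\ft(n))=\Theta_{\bgl}(Q)\Theta_{\bmu}(Q)^{-1}$. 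Once the preliminary lemmas of Section~\ref{Sec:Combinatorial-lemmas} are in hand, the inductive step is a short algebraic manipulation; the real substantive work has already been carried out in those combinatorial identities rather than in the final argument itself.
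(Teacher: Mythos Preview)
Your argument is correct and follows exactly the route the paper takes: induction on $n$ via the factorisation $\Phi=\phi_n\cdot\Phi_{n-1}$, invoking Lemma~\ref{Lemm:Theta-phi-E} to convert $\phi_n E_{\fu}$ into $(z_n-\res_\ft(n))(z_n-J_n)^{-1}E_\fu$ up to the scalar $\Theta_\ft(z_n)$, then Lemma~\ref{Lemm:Theta_ft-bgl-bmu} to match that scalar with $\Theta_{\bgl}(Q)\Theta_{\bmu}(Q)^{-1}$, and finally~(\ref{Equ:E_ft=E_fu}) to recover $E_\ft$. Your write-up simply unpacks what the paper compresses into one sentence, and your explicit $n=1$ base case (with the observation that $t(q_c)=\prod_{i\neq c}(t-q_i)$) is a welcome addition, though strictly speaking the induction could start at $n=0$ where everything is vacuous.
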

   \begin{proof}
     The theorem follows, by induction on $n$, from (\ref{Equ:E_ft=E_fu}) and Lemmas~\ref{Lemm:Theta-phi-E} and \ref{Lemm:Theta_ft-bgl-bmu}.
   \end{proof}

We close this paper with some remarks on the study of the fusion procedure for the degenerate cyclotomic Hecke algebras and related topics.

\begin{point}{}* The fusion procedure for the Yang-Baxter equation was first introduced by Kulish et.al. in \cite{Kulish-R-S}, which allows the construction of new solutions of the Yang-Baxter equation starting from a given fundamental solution. As it is suggested in \cite{Molev}, a ``fused solution" of the Yang-Baxter equation can be investigated using the certain version of the Schur-Weyl duality (see \cite{dAndecy}). Note that Brundan and Kleshchev have established  the Schur-Weyl duality for higher levels in \cite{BK-Schur-Weyl}. It may be interesting to use the fusion formula for the degenerate cyclotomic Hecke algebras to obtain a family of fused solutions of the Yang-Baxter equation acting on finite-dimensional irreducible representations of the finite $W$-algebras.

 It is well-known that the (degenerate) cyclotomic Hecke algebras, the (degenerate) affine Hecke algebras are closely related (ref.\cite{Kbook,BK-Block}). We would like to know whether the fusion formulae in this paper are certain specializations of those ones in \cite{Nazarov-2003} and \cite{Ogievetsky-Andecy2011,Ogievetsky-Andecy2013}. On the other hand, it seems likely that we may use the reflection equation for the degenerate cyclotomic Hecke algebras to define and study the Bethe subalgebras of the degenerate cyclotomic Hecke algebras (cf. \cite{Isaev-Kirillov}).  We hope that these Bethe subalgebras can be used to investigae the center of the (degenerate) cyclotomic Hecke algebras.
\end{point}

%%%%%%%%%%%%%%%%%%%%%%%%%%%%%%%%%%%%%%%%%%%%%%%%%%%%%%%%%%%%%%%%%%%%%%
%\bibliographystyle{amsplain}

\end{CJK*}
\end{document}